%%
%% Copyright 2007, 2008, 2009 Elsevier Ltd
%%
%% This file is part of the 'Elsarticle Bundle'.
%% ---------------------------------------------
%%
%% It may be distributed under the conditions of the LaTeX Project Public
%% License, either version 1.2 of this license or (at your option) any
%% later version.  The latest version of this license is in
%%    http://www.latex-project.org/lppl.txt
%% and version 1.2 or later is part of all distributions of LaTeX
%% version 1999/12/01 or later.
%%
%% The list of all files belonging to the 'Elsarticle Bundle' is
%% given in the file `manifest.txt'.
%%

%% Template article for Elsevier's document class `elsarticle'
%% with numbered style bibliographic references
%% SP 2008/03/01
%%
%%
%%
%% $Id: elsarticle-template-num.tex 4 2009-10-24 08:22:58Z rishi $
%%
%%
%\documentclass[final,3p,times]{elsarticle}
\documentclass[preprint,12pt]{elsarticle}
\usepackage{amsthm,amsfonts,amssymb,amscd,amsmath,enumerate,verbatim,calc,graphicx,geometry}
\usepackage[all]{xy}
\newtheorem{theorem}{Theorem}[section]

\newtheorem{proposition}[theorem]{Proposition}
\newtheorem{corollary}[theorem]{Corollary}
\theoremstyle{definition}
\theoremstyle{definitions}
\newtheorem{definition}[theorem]{Definition}

\newtheorem{remark}[theorem]{Remark}
\newtheorem{example}[theorem]{Example}
\theoremstyle{notations}

\theoremstyle{remarks}

\newcommand{\N}{\mathbb{N}}

\newcommand{\sub}{\subseteq}

\newcommand{\ov}{\overline}

\newcommand{\lo}{\longrightarrow}

\newcommand{\wt}{\widetilde}

\newcommand{\al}{\alpha}

\newcommand{\bt}{\beta}
\newcommand{\ti}{\tilde}
\newcommand{\tx}{\textit}

\newcommand{\lk}{\langle}
\newcommand{\rg}{\rangle}
\newcommand{\psg}{\pi_1^{sg}(X,x)}
\newcommand{\pt}{\pi_1^{qtop}(X,x)}
\newcommand{\px}{p:(\wt{X}, \ti{x})\longrightarrow (X,x)}
\newcommand{\ps}{\pi_1^s(X,x)}
\newcommand{\pst}{p_*\pi_1(\wt{X},\ti{x})}

\journal{Mathematica Slovaca}

\begin{document}

\begin{frontmatter}

%% Title, authors and addresses

%% use the tnoteref command within \title for footnotes;
%% use the tnotetext command for the associated footnote;
%% use the fnref command within \author or \address for footnotes;
%% use the fntext command for the associated footnote;
%% use the corref command within \author for corresponding author footnotes;
%% use the cortext command for the associated footnote;
%% use the ead command for the email address,
%% and the form \ead[url] for the home page:
%%
%% \title{Title\tnoteref{label1}}
%% \tnotetext[label1]{}
%% \author{Name\corref{cor1}\fnref{label2}}
%% \ead{email address}
%% \ead[url]{home page}
%% \fntext[label2]{}
%% \cortext[cor1]{}
%% \address{Address\fnref{label3}}
%% \fntext[label3]{}

\title{Topological Fundamental Groups and Small Generated Coverings}

%% use optional labels to link authors explicitly to addresses:
%% \author[label1,label2]{<author name>}
%% \address[label1]{<address>}
%% \address[label2]{<address>}

\author[]{Hamid~Torabi}
\ead{hamid$_{-}$torabi86@yahoo.com}
\author[]{Ali~Pakdaman}
\ead{Alipaky@yahoo.com}
\author[]{Behrooz~Mashayekhy\corref{cor1}}
\ead{bmashf@um.ac.ir}
\address{Department of Pure Mathematics, Center of Excellence in Analysis on Algebraic Structures, Ferdowsi University of Mashhad,\\
P.O.Box 1159-91775, Mashhad, Iran.}
\cortext[cor1]{Corresponding author}
\begin{abstract}
This paper is devoted to study some topological properties of the SG subgroup, $\pi_1^{sg}(X,x)$, of the quasitopological fundamental group of a based space $(X,x)$, $\pt$, its topological properties as a subgroup
of the topological fundamental group $\pi_1^{\tau}(X,x)$ and its influence on the existence of universal covering of $X$. First, we introduce small generated spaces which have indiscrete topological fundamental groups and also small generated coverings which are universal coverings
 in the categorical sense. Second, we give a necessary and sufficient condition for the existence of the small generated coverings.
 Finally, by introducing the notion of semi-locally small generatedness we show that the quasitopological fundamental groups of semi-locally small generated spaces are topological groups.

\end{abstract}

\begin{keyword}
Quasitopological group\sep Topological fundamental group\sep SG subgroup\sep Small generated covering\sep Semi-locally small generated space.
%% keywords here, in the form: keyword \sep keyword
\MSC[2010]{57M10, 57M12, 55Q05, 55Q52}
%% MSC codes here, in the form: \MSC code \sep code
%% or \MSC[2008] code \sep code (2000 is the default)

\end{keyword}

\end{frontmatter}

%%
%% Start line numbering here if you want
%%
% \linenumbers

%% main text
%\\\\\\\\\\\\\\\\\\\\\\\\\\\\\\\\\\\\\\\\\\\\\\\\\\\\\\\\\\\\\\\\\\\\\\\\\\\\\\\\\\\\\\\\\\\\\\\\\\\\\\\\\\\\\\\\\\\\\\\\\\\\\\\\\\\\\\\\\
%=========================================================================================================================================
%/////////////////////////////////////////////////////////////////////////////////////////////////////////////////////////////////////////
\section{Introduction and Motivation}
As it is shown in \cite{P2}, there exist special subgroups of fundamental groups of non-homotopically Hausdorff spaces which have great influence on their coverings. In fact, if a space $X$ is not homotopically Hausdorff, then there exists $x\in X$ and a nontrivial loop in $X$ based at $x$ which is homotopic to a loop in every neighborhood $U$ of $x$ (see \cite{FZ} for the definition of homotopically Hausdorffness). Z. Virk \cite{V} called these loops as small loops and showed that for every $x\in X$ they form a subgroup of $\pi_1(X,x)$ which is named small loop group and denoted by $\pi_1^s(X,x)$. In general, various points of $X$ have different small loop groups and hence in order to have a subgroup independent of the base point, Virk \cite{V} introduced the SG (small generated) subgroup, denoted by $\psg$, as the subgroup generated by the following set
$$\{[\al*\bt*\al^{-1}]\ |\ [\bt]\in\pi_1^s(X,\al(1)),\ \al\in P(X,x)\},$$
where $P(X,x)$ is the space of all paths from $I$ into $X$ with initial point $x$.
Virk \cite{V} calls a space $X$ a small loop space if $\pi_1^s(X,x)= \pi_1(X,x)\neq 1$ for all $x\in X$.
The authors \cite{P2} showed that for a covering $p:(\wt{X}, \ti{x})\rightarrow (X,x)$ the following relations hold:
$$\pi_1^s(X,x)\leq\psg\leq p_*\pi_1(\wt{X},\ti{x}).$$
It should be noted that by a result of Spanier \cite[\S 2.5 Lemma 11]{S} one has
$$\psg\leq \pi(\mathcal{U},x)\leq p_*\pi_1(\wt{X},\ti{x}),\ \ \ (*)$$ where $\mathcal{U}$ is any open cover of $X$ by evenly covered open sets and $\pi(\mathcal{U},x)$ is the subgroup of $\pi_1(X,x)$ generated by all elements of the form $[\al*\bt*\al^{-1}]$, for all $\al\in P(X,x)$ and $[\bt]\in\pi_1(U,\al(1))$ for some $U\in \mathcal{U}$.
We also showed \cite{P2} that if $\wt{X}$ is a small loop space, then a covering $\px$ is the universal covering for which $\pst=\ps$ and called it {\em small covering}. Moreover, the authors \cite{P2} showed that a connected and locally path connected space has a small covering if and only if it is a semi-locally small loop space. A space is called a semi-locally small loop space if for every $x\in X$ there exists an open neighborhood $U$ of $x$ such that $i_*\pi_1(U,y)=\pi_1^s(X,y)$, for all $y\in U$, where $i:U\hookrightarrow X$ is the inclusion map. If $X$ is a semi-locally small loop space, then for every $x\in X$ we have $\ps=\psg$ and so small loop groups are isomorphic for different base points \cite[Lemma 4.2]{P2}.
Since the SG subgroup does not depend on the base point even for spaces with various small loop groups, we are interested in finding out some conditions which guaranty the existence of a covering $\px$ with $\pst=\psg$.

Endowing a topology on the familiar fundamental group $\pi_1(X,x)$ as a quotient of the loop space $\Omega(X,x)$ equipped with the compact-open topology
with respect to the canonical map $\Omega(X,x)\lo \pi_1(X,x)$ identifying path components, the quasitopological fundamental group $\pt$ was introduced by D. Biss \cite{B}. It should be mentioned that $\pt$ is a quasitopological group in the sense of \cite{A}, that is, a group with a topology such that inversion and all translations are continuous, and it is not always a topological group (see \cite{Br,F}). Also, the topological fundamental group $\pi_1^{\tau}(X,x)$ is the fundamental group endowed with another topology introduced by J. Brazas \cite {Br2}. In fact, Brazas gives a topology to $\pi_1^{\tau}(X,x)$ by removing the smallest number of open sets from the topology of $\pt$ in order to make it a topological group.

The existence of a covering $\px$ with $\pst=\psg$ is depended on the topology of $\psg$ in $\pt$ (see correction of \cite[Theorem 5.5]{B} in \cite{T}):\\
(1.1) \emph{ Given a connected and locally path connected space $X$ and a subgroup $H$ of $\pi_1(X,x)$, there is a covering $p:\wt{X}\lo X$ with $p_*\pi_1(\wt{X},\ti{x})=H$  if and only if $H$ contains a normal subgroup of $\pi_1(X,x)$ which is open in $\pi_1^{qtop}(X,x)$; in which case $H$ itself is open in  $\pi_1^{qtop}(X,x)$.}\\
Since $\pi_1^{sg}(X,x)$ is normal in $\pi_1(X,x)$, there is a covering $p:\wt{X}\lo X$ with $p_*\pi_1(\wt{X},\ti{x})=\pi_1^{sg}(X,x)$ if and only if $\pi_1^{sg}(X,x)$ is an open subgroup of $\pi_1^{qtop}(X,x)$. Moreover, in this case $\pi_1(\wt{X},\ti{x})=\pi_1^{sg}(\wt{X},\wt{x})$. (cf. \cite[Theorem 4.7]{P2}).

In this paper, we call such coverings as {\em small generated covering}. At first, by showing that every open neighborhood of the trivial element in $\pt$ contains $\psg$, we conclude that topological fundamental groups of {\em small generated spaces} are indiscrete topological groups. A space $X$ is said to be small generated if $\pi_1(X,x)=\psg $ for all $x\in X$. Also, we show that every nonempty open subset of $\pt$ is a union of some cosets of the normal subgroup $\psg$ and hence the (quasi)topological fundamental groups of non-homotopically Hausdorff spaces will be described somehow. Furthermore, by some examples, we show that $\psg$ is not necessarily an open or a closed subgroup.

In Section 3, we introduce small generated coverings and show that they are universal coverings in the categorical sense, that is, a covering
$p:\wt{X}\lo X$ with the property that for every covering $q:\wt{Y}\lo X$ with a path connected space $\wt{Y}$ there exists a unique covering $f:\wt{X}\lo\wt{Y}$ such that $q\circ f= p$. Moreover, we find the necessary and sufficient condition \emph{semi-locally small generatedness} for the existence of small generated coverings which is an answer to the question at the end of \cite{P2}. We call a space $X$ semi-locally small generated if for every $x\in X$ there exists an open neighborhood $U$ of $x$ such that $i_*\pi_1(U,x)\leq \psg$. In fact, we show that in a connected and locally path connected space $X$, semi-locally small generatedness is equivalent to the property that a subset $U\sub\pt$ is open if and only if $U$ is a union of some cosets of $\psg$, for every $x\in X$.

It has been shown that the group multiplication in the quasitopological fundamental group introduced by Biss \cite{B} is not necessarily continuous (see \cite{Br,F}). Therefore, it seems interesting to find out when quasitopological fundamental groups are topological groups. In Section 4, we prove that quasitopological fundamental groups of semi-locally small generated spaces are topological groups.

Throughout this article, all the homotopies between two paths are relative to end points,
$X$ is a topological space with the base point $x\in X$, and $p:\wt{X}\lo X$ is a covering of $X$ with $\ti{x}\in p^{-1}(\{x\})$ as the base point of $\wt{X}$.

\section{Topology of the Small Generated Subgroup}
The SG subgroup of the fundamental group of a space $X$ first was introduced by Virk \cite{V} and is the subgroup of $\pi_1(X,x)$ generated by the set $\{[\al*\bt*\al^{-1}]\ |\ [\bt]\in\pi_1^s(X,\al(1)),\ \al\in P(X,x)\}$. It is shown that $\psg$ is a normal subgroup of $\pi_1(X,x)$ and it is point free, that means, for every $x,y\in X$, $\psg\cong\pi_1^{sg}(X,y)$ \cite{V}. Also, since the presence of small loops is equivalent to the absence of homotopically Hausdorffness, a space $X$ is homotopically Hausdorff if and only if $\psg= 1$.

\begin{definition}
For a topological space $X$, a loop $\al:I\lo X$ based at $x$ is called small generated if $[\al]\in\psg$.
\end{definition}

Since the homotopy classes of small loops have a representative in every neighborhood of their base point, they belong to the topological closure of the homotopy class of the constant loop in the quasitopological fundamental group. For, if $\mathcal{W}=\bigcap_{i=1}^n\lk K_i,U_i\rg$ is a basis open neighborhood of the constant path $e_x$ in the space of $x$ based loops in $X$, $\Omega(X,x)$, then $U=\bigcap_{i=1}^nU_i$ is an open neighborhood of $x$. For the small loop $\al$ based at $x$ there exists a loop $\al_U:I\lo U$ homotopic to $\al$ which implies that every open neighborhood of the trivial element in $\pt$ contains $\ps$. Using this fact, the authors \cite{P1} proved that small loop spaces have indiscrete quasitopological fundamental group. Biss \cite{B} showed that the Harmonic Archipelago has also indiscrete quasitopological fundamental group. But, for Harmonic Archipelago we have $\pi_1(HA,0)=\pi_1^{sg}(HA,0)$ and hence it seems that the homotopy class of small generated loops do also belong to the closure of the trivial element in the topological fundamental group, as it is shown in the following theorem.
\begin{theorem}
If $(X,x)$ is a pointed topological space and $U$ is an open neighborhood of the identity element $[e_{x}]\in\pi_1^{qtop}(X,x)$, then $\pi_1^{sg}(X,x)\sub U$. Moreover $\pi_1^{sg}(X,x)\sub \ov{\{[e_{x}]\}}$.
\end{theorem}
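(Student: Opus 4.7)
The plan is to establish the first assertion first and then deduce the closure statement from it using the quasitopological group structure of $\pt$. Let $q:\Omega(X,x)\lo\pt$ denote the canonical quotient map (with $\Omega(X,x)$ in the compact-open topology), so $q^{-1}(U)$ is an open, saturated subset containing the constant loop $e_x$. I would first handle a single generator $g=[\al*\bt*\al^{-1}]$ of $\psg$, with $\al\in P(X,x)$, $y=\al(1)$, and $[\bt]\in\pi_1^s(X,y)$, and then bootstrap to arbitrary elements.

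For the generator case, the key observation is that the loop $\al*e_y*\al^{-1}$ (taken in the standard thirds parametrization, with the constant loop $e_y$ in the middle) is null-homotopic, so it lies in $q^{-1}(U)$. Choose a basic compact-open neighborhood $\mathcal{W}=\bigcap_{i=1}^n\lk K_i,W_i\rg$ with
\[
\al*e_y*\al^{-1}\in\mathcal{W}\sub q^{-1}(U),
\]
and set $N:=\bigcap\{W_i:K_i\cap[1/3,2/3]\neq\emptyset\}$. Since the loop takes the value $y$ throughout $[1/3,2/3]$, each such $W_i$ contains $y$, hence $N$ is an open neighborhood of $y$. Smallness of $\bt$ at $y$ then supplies a loop $\bt_N$ at $y$ with image in $N$ and $\bt_N\sq\bt$. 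A pointwise check on each $K_i$, separating its intersection with $[0,1/3]$, $[1/3,2/3]$, and $[2/3,1]$ (the first and last agreeing with $\al*e_y*\al^{-1}$, and the middle landing in $N\sub W_i$), shows $\al*\bt_N*\al^{-1}\in\mathcal{W}\sub q^{-1}(U)$, so $g\in U$.

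To pass to arbitrary $g\in\psg$, I write $g=g_1^{\varepsilon_1}\cdots g_m^{\varepsilon_m}$ with each $g_i$ a generator of $\psg$ and $\varepsilon_i\in\{\pm1\}$, and induct on $m$. Since $\pt$ is a quasitopological group, inversion and every left translation are self-homeomorphisms; in particular $U^{-1}$ and $h^{-1}U$ are open sets containing $[e_x]$ whenever $h\in U$. Combined with the generator case this yields $g_1^{\varepsilon_1}\in U$, so $g_1^{-\varepsilon_1}U$ is an open neighborhood of $[e_x]$ to which the induction hypothesis applies, placing $g_2^{\varepsilon_2}\cdots g_m^{\varepsilon_m}\in g_1^{-\varepsilon_1}U$, i.e.\ $g\in U$. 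For the closure statement, if $V$ is any open neighborhood of some $g\in\psg$, then $g^{-1}V$ is an open neighborhood of $[e_x]$; applying the first assertion to $g^{-1}\in\psg$ gives $g^{-1}\in g^{-1}V$, which unpacks to $[e_x]\in V$, so $g\in\ov{\{[e_x]\}}$.

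The main obstacle is the generator step, specifically the verification that $\al*\bt_N*\al^{-1}\in\mathcal{W}$. The subtlety is that $\mathcal{W}$ surrounds $\al*e_y*\al^{-1}$ rather than $e_x$, and a single compact $K_i$ may straddle all three thirds, so $N$ must be calibrated exactly to those indices $i$ whose $K_i$ meets $[1/3,2/3]$; any cruder choice either fails to be a neighborhood of $y$ or fails to keep the modified loop inside $\mathcal{W}$. Once this is set up, the bootstrap to the full subgroup and the closure assertion are essentially formal consequences of the quasitopological group axioms.
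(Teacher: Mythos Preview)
Your argument is correct and follows the same overall strategy as the paper: handle a single generator $[\al*\bt*\al^{-1}]$ by choosing a subbasic compact-open neighborhood of a null-homotopic representative, then bootstrap to arbitrary words in the generators using that translations (and inversion) in a quasitopological group are homeomorphisms, and finally deduce the closure statement by one more translation.

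The one genuine difference is in the generator step. The paper works with the halves parametrization $\al*\al^{-1}$, which touches $y=\al(1)$ only at the single point $t=1/2$; this forces a somewhat delicate construction in which one finds a small $\delta>0$ avoiding the $K_i$ that miss $1/2$, squeezes the small representative $\bt'$ into $[1/2,1/2+\delta/2]$, and reparametrizes a portion of $\al^{-1}$ on $[1/2+\delta/2,1/2+\delta]$ before verifying membership in each $\lk K_i,U_i\rg$. Your thirds parametrization $\al*e_y*\al^{-1}$ sits at $y$ on the whole interval $[1/3,2/3]$, so the neighborhood $N=\bigcap\{W_i:K_i\cap[1/3,2/3]\neq\emptyset\}$ is automatically an open set containing $y$, and the verification that $\al*\bt_N*\al^{-1}\in\mathcal{W}$ is a straightforward case split with no $\delta$ or reparametrization needed. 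This buys you a cleaner and shorter proof of the key step; the paper's version buys nothing extra, so your simplification is a strict improvement.
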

\begin{proof}
First we show that every open neighborhood of $[e_x]$ contains every generator of $\pi_1^{sg}(X,x)$. For this, let $W$ be an open neighborhood of $[e_x]$ and $[\al*\bt*\al^{-1}]$ be a generator of $\pi_1^{sg}(X,x)$. Since $[\al*\al^{-1}]=[e_x]$, $\al*\al^{-1}\in q^{-1}(W)$, where $q:\Omega(X,x)\lo\pi_1(X,x)$ is the quotient map with $q(\al)=[\al]$. Hence there exists a basic open neighborhood $\mathcal{U}=\bigcap_{i=1}^n\lk K_i,U_i\rg$ of $\al*\al^{-1}$ in $\Omega(X,x)$ such that $\mathcal{U}\sub q^{-1}(W)$.

 Let $A=\{i\in\{1,2,...,n\}|\ 1/2\in K_i\}$ and $B=\{i\in\{1,2,...,n\}|\ 1/2\notin K_i\}$. Since $\bigcup_{i\in B}K_i$ is compact, there exists $\delta_1>0$ such that for every $i\in B$, $[1/2-\delta_1,1/2+\delta_1]\cap K_i=\varnothing$.
If $A\neq\varnothing$, then $V=\bigcap_{i\in A}U_i$ is a nonempty open subset of $X$ that contains $\al(1)=\bt(0)=(\al*\al^{-1})(1/2)$. Choose $\delta_2>0$ such that $[1-2\delta_2,1]\sub \al^{-1}(V)$. Since $[\bt]\in\pi_1^s(X,\al(1))$, there exists a loop $\bt':I\lo V$ such that $[\bt]=[\bt']$. If $A=\varnothing$, put $\delta_2=1/2$ and $\bt'=\bt$. Define $f:I\lo X$ by
\begin{displaymath}
f(t)= \left\{
\begin{array}{lr}
\al(2t)                     &       0\leq t\leq 1/2 \\
\beta'\circ\varphi_1(t)     &      1/2\leq t\leq 1/2+\delta/2\\
\al^{-1}(2\varphi_2(t)-1)   &  1/2+\delta/2\leq t\leq 1/2+\delta\\
\al^{-1}(2t-1)              &  1/2+\delta\leq t\leq 1,
\end{array}
\right.
\end{displaymath}
where $\delta=min\{\delta_1,\delta_2\}$, $\varphi_1:[1/2,1/2+\delta/2]\lo I$ and $\varphi_2:[1/2+\delta/2,1/2+\delta]\lo [1/2,1/2+\delta]$ are linear homeomorphisms with $\varphi_1(1/2)=0$ and $\varphi_2(1/2+\delta/2)=1/2$. By gluing lemma,
$f$ is continuous and hence is a loop such that $[f]=[\al*\bt*\al^{-1}]$. We show that $f\in\bigcap_{i=1}^n\lk K_i,U_i\rg$.

For every $i\in B$, $f(K_i)\sub U_i$ since $K_i\sub I\setminus[1/2-\delta,1/2+\delta]$, $f|_{I\setminus[1/2,1/2+\delta]}=\al*\al^{-1}|_{ I\setminus[1/2,1/2+\delta]}$ and $\al*\al^{-1}(K_i)\sub U_i$.\\
If $A\neq\varnothing$, then for every $i\in A$, $f(K_i)\sub U_i$ since $f|_{I\setminus[1/2,1/2+\delta]}=\al*\al^{-1}|_{ I\setminus[1/2,1/2+\delta]}$, $\al*\al^{-1}(K_i)\sub U_i$ and $f([1/2,1/2+\delta])\sub V\sub U_i$.\\
Therefore $f\in\bigcap_{i=1}^n\lk K_i,U_i\rg\sub q^{-1}(W)$ which implies that $[\al*\bt*\al^{-1}]=[f]\in W$.

For a given $g\in\psg$ we show that $g\in U$. There are finitely many generators of $\psg$, $[\al_i*\bt_i*\al_i^{-1}]$ for $i=1,2,...,m$, such that $g=[\al_1*\bt_1*\al_1^{-1}][\al_2*\bt_2*\al_2^{-1}]...[\al_m*\bt_m*\al_m^{-1}]$. Since $ U$ is an open neighborhood of $[e_x]$, by the above discussion $[\al_1*\bt_1*\al_1^{-1}]\in U$. $\pt$ is a homogeneous space (see \cite{Cal}) and hence $[\al_1*\bt_1*\al_1^{-1}]^{-1} U$ is an open neighborhood of $[e_x]$ which implies that $[\al_2*\bt_2*\al_2^{-1}]\in[\al_1*\bt_1*\al_1^{-1}]^{-1} U$. By a similar argument we have
 $$[\al_m*\bt_m*\al_m^{-1}]\in[\al_{m-1}*\bt_{m-1}*\al_{m-1}^{-1}]^{-1}...[\al_2*\bt_2*\al_2^{-1}]^{-1}[\al_1*\bt_1*\al_1^{-1}]^{-1} U$$
 and therefore $g\in U$. Moreover, since $\pt$ is a quasitopological group and hence all translations are homeomorphism, the previous result implies that $\pi_1^{sg}(X,x)\sub \ov{\{[e_{x}]\}}$.
\end{proof}
\begin{remark}
By \cite[Proposition 1.4.13]{A} the closure $\ov{H}$ of a subgroup $H$ of a quasitopological group $G$ is also a subgroup of $G$. As a consequence, it is routine to show that the closure $\ov{\{1\}}$ of the identity element of $G$ is always a closed normal subgroup of $G$, which equals $G$ if and only if $G$ has indiscrete topology. Also, it is easy to see that every nonempty closed set (and hence every nonempty open set) of $G$ is a disjoint union of cosets of the closure of the identity.
\end{remark}

The following corollaries are direct consequences of the inclusion $\pi_1^{sg}(X,x)\sub \ov{\{[e_{x}]\}}$ and the above remark.
\begin{corollary}
Every nonempty open or closed subset of $\pt$ is a disjoint union of some cosets of $\psg$ and also some cosets of $\ov{\psg}$..
\end{corollary}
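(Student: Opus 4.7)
The plan is to bootstrap the corollary from Theorem~2.2 and Remark~2.3, using as the key intermediate fact that the closure of the identity coincides with $\overline{\psg}$. First I would note that since $[e_x]\in\psg$, we get $\{[e_x]\}\subseteq\psg$, and so $\overline{\{[e_x]\}}\subseteq\overline{\psg}$. On the other hand, Theorem~2.2 gives $\psg\subseteq\overline{\{[e_x]\}}$, and since $\overline{\{[e_x]\}}$ is closed, taking closures yields $\overline{\psg}\subseteq\overline{\{[e_x]\}}$. Combining these inclusions produces the identity $\overline{\psg}=\overline{\{[e_x]\}}$.

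Next I would apply Remark~2.3 directly to the quasitopological group $G=\pt$: the remark asserts that every nonempty closed (and hence every nonempty open) subset of $G$ is a disjoint union of cosets of $\overline{\{1\}}$. By the equality established above, such a set is thus a disjoint union of cosets of $\overline{\psg}$, which settles half of the conclusion.

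To obtain the statement for cosets of $\psg$ itself, I would use that $\psg$ is a (normal) subgroup of $\overline{\psg}$, so each coset $g\cdot\overline{\psg}$ of the larger subgroup partitions canonically into cosets of the smaller subgroup $\psg$. Consequently any disjoint union of cosets of $\overline{\psg}$ is automatically a disjoint union of cosets of $\psg$, which finishes the proof.

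There is no real obstacle here: the whole argument is a short bookkeeping deduction from Theorem~2.2 and Remark~2.3. The only point that requires a moment of thought is the observation $\overline{\psg}=\overline{\{[e_x]\}}$, which makes Remark~2.3 directly applicable; everything else is the elementary fact that cosets of a subgroup refine cosets of any larger subgroup.
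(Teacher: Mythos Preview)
Your proposal is correct and follows essentially the same route the paper indicates: the paper states that Corollary~2.4 is a direct consequence of the inclusion $\psg\subseteq\overline{\{[e_x]\}}$ (Theorem~2.2) together with Remark~2.3, and you have simply written out those details. The equality $\overline{\psg}=\overline{\{[e_x]\}}$ that you establish is exactly the observation the paper records in Remark~2.6, so your argument is in full agreement with the paper's intended reasoning.
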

\begin{corollary}
For a pointed topological space $(X,x)$, if $\{[e_x]\}$ is closed (or equivalently the topology of $\pt$ is $T_0$), then $X$ is homotopically Hausdorff.
\end{corollary}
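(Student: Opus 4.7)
The plan is essentially a one-line consequence of the preceding theorem, padded with a short argument to handle the parenthetical equivalence. Specifically, the preceding theorem supplies $\pi_1^{sg}(X,x)\subseteq\overline{\{[e_x]\}}$, and the beginning of Section~2 records that $X$ is homotopically Hausdorff if and only if $\pi_1^{sg}(X,x)=1$, so once I know $\overline{\{[e_x]\}}=\{[e_x]\}$ I am done.

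First, I would verify the parenthetical equivalence ``$\{[e_x]\}$ is closed $\Longleftrightarrow$ $\pt$ is $T_0$.'' The forward direction is automatic: since $\pt$ is a quasitopological group, left translations are homeomorphisms, so closedness of $\{[e_x]\}$ yields closedness of every singleton, hence $T_1$ and in particular $T_0$. For the reverse direction, which is the only mildly nontrivial point, I would invoke the remark made just above the corollary: set $N:=\overline{\{[e_x]\}}$, and recall that every nonempty open subset of $\pt$ is a disjoint union of cosets of $N$. If $g\in N$ with $g\neq[e_x]$, then $T_0$ would supply an open set $U$ distinguishing $[e_x]$ and $g$; but $U$ is a union of $N$-cosets, so as soon as one of $[e_x],g$ lies in $U$, the entire coset $N\cdot[e_x]=N$ does, forcing the other to lie in $U$ as well. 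This contradiction gives $N=\{[e_x]\}$.

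Second, assuming $\{[e_x]\}$ is closed, the theorem yields
$$\pi_1^{sg}(X,x)\ \subseteq\ \overline{\{[e_x]\}}\ =\ \{[e_x]\},$$
so $\pi_1^{sg}(X,x)=1$. Since $X$ is homotopically Hausdorff exactly when $\pi_1^{sg}(X,x)=1$ (as recalled at the start of Section~2, using also the point-free nature of the SG subgroup), the conclusion follows.

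The only place that might look like an obstacle is the $T_0\Rightarrow T_1$ step in a mere quasitopological group, where the classical topological-group argument does not apply verbatim. However, the coset decomposition from the preceding remark sidesteps this cleanly, so there is no real difficulty in the corollary beyond citing the already-established theorem.
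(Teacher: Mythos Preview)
Your proposal is correct and follows exactly the approach the paper intends: the corollary is stated there without proof, as a direct consequence of the inclusion $\pi_1^{sg}(X,x)\subseteq\overline{\{[e_x]\}}$ from Theorem~2.2 together with Remark~2.3 (the coset decomposition by $\overline{\{[e_x]\}}$), and you have simply written out those details, including the $T_0\Leftrightarrow T_1$ equivalence via the coset decomposition.
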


\begin{remark}
Note that by Corollary 2.4 $\ov{\psg}$ is a union of some cosets of $\psg$ and $\ov{\psg}=\ov{\{[e_x]\}}$. Also, $\psg$ is a dense subgroup of $\pt$ if and only if $\pt$ is an indiscrete topological group.
\end{remark}

Let $G$ be a quasitopological group. If $\ov{\{1\}}$ has finite index in $G$, then $\ov{\{1\}}$ is both closed and open in $G$. Using this fact, $\ov{\psg}=\ov{\{[e_x]\}}$ and (1.1), we have the following corollaries.
\begin{corollary}
If $\ov{\psg}$ is a finite index subgroup of $\pt$, then $\pt$ is an indiscrete topological group if and only if $X$ has no non-trivial covering.
\end{corollary}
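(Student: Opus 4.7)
The hypothesis that $\ov{\psg}$ has finite index in $\pt$ will be leveraged to show that $\ov{\psg}$ is simultaneously open and normal, which is exactly the input needed for criterion (1.1). By Remark 2.6, $\ov{\psg}=\ov{\{[e_x]\}}$; being the closure of a point, this is closed. Its (finitely many) cosets partition $\pt$ into closed sets, since left translations are homeomorphisms in the quasitopological group $\pt$, and a finite-index closed subgroup is therefore also open. Normality is inherited from $\psg$: conjugation by any $g\in\pi_1(X,x)$ is a homeomorphism of $\pt$, so $g^{-1}\,\ov{\psg}\,g=\ov{g^{-1}\psg\, g}=\ov{\psg}$.

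For the forward implication I would argue directly from (1.1) without using the finite-index hypothesis at all. If $\pt$ is indiscrete, its only open subgroup is $\pi_1(X,x)$ itself. Given any connected covering $p:(\wt X,\ti x)\to(X,x)$, (1.1) forces $\pst$ to be open, hence equal to $\pi_1(X,x)$; the index-equals-number-of-sheets principle makes $p$ a single-sheeted cover, i.e.\ a homeomorphism, so the covering is trivial.

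For the reverse implication, the opening paragraph lets me feed $H=\ov{\psg}$ into (1.1) (as an open normal subgroup of itself), obtaining a connected covering $p:(\wt X,\ti x)\to(X,x)$ with $\pst=\ov{\psg}$. The assumption that $X$ has no non-trivial covering forces $p$ to be a homeomorphism, so $\ov{\psg}=\pi_1(X,x)$. By Remark 2.6 this is equivalent to $\pt$ being indiscrete, and an indiscrete quasitopological group is automatically a topological group.

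\textbf{Main obstacle.} The only delicate point is the clopenness-and-normality argument in the first paragraph: one must be careful to invoke \emph{only} continuity of translations and inversion (which do hold in $\pt$), not continuity of multiplication (which may fail), both to propagate closedness across the cosets and to carry normality through the closure operation. Once $\ov{\psg}$ is recognized as an open normal subgroup, the rest is a direct assembly of (1.1) and Remark 2.6.
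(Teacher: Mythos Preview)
Your proposal is correct and follows essentially the same approach as the paper: identify $\ov{\psg}$ with $\ov{\{[e_x]\}}$ via Remark~2.6, use finite index plus closedness to get openness, note normality (the paper gets this from Remark~2.3, you from the homeomorphism property of conjugation---same content), and then invoke (1.1) in both directions together with the equivalence in Remark~2.6 between indiscreteness and $\ov{\{[e_x]\}}=\pt$. Your write-up simply spells out what the paper compresses into the sentence preceding the corollary.
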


\begin{corollary}
If $\ov{\psg}$ is a finite index subgroup of $\pt$ and $\pt$ is connected, then $\pt$ is an indiscrete topological group and therefore $X$ has no non-trivial cover.
\end{corollary}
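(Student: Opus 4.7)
The plan is to leverage Corollary 2.7 almost entirely, with the connectedness hypothesis used only to rule out the case where $\pt$ has a non-trivial topology. By Remark 2.6 we have $\ov{\psg}=\ov{\{[e_x]\}}$, so $\ov{\psg}$ is a closed subgroup of the quasitopological group $\pt$. First I would verify that a finite-index closed subgroup of a quasitopological group is automatically clopen: all left translations are homeomorphisms, so each of the finitely many cosets of $\ov{\psg}$ is closed, and each coset is the complement of the (finite) union of the others, hence open.

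Next I would use connectedness of $\pt$. The finitely many cosets of $\ov{\psg}$ form a partition of $\pt$ into pairwise disjoint clopen sets; connectedness forces there to be only one such set, so $\ov{\psg}=\pt$. Unwinding the identification $\ov{\psg}=\ov{\{[e_x]\}}$, this says $\ov{\{[e_x]\}}=\pt$, which by the characterization recalled in Remark 2.3 is equivalent to $\pt$ carrying the indiscrete topology.

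Once $\pt$ is indiscrete, multiplication $\pt\times\pt\to\pt$ is automatically continuous (every map into an indiscrete space is), so $\pt$ is in fact a topological group, not merely quasitopological. Finally, for the statement about coverings I would simply invoke Corollary 2.7: the hypothesis on $\ov{\psg}$ is already in force, and we have just established that $\pt$ is indiscrete, so the corollary yields that $X$ has no non-trivial covering. Alternatively one could cite (1.1) directly and observe that in an indiscrete group the only open subgroup is the whole group, so every covering corresponds to $H=\pi_1(X,x)$ and is therefore trivial.

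The argument is essentially a routine assembly of the lemmas and corollaries already proved in Section 2, so there is no serious obstacle; the only point that requires a sentence of justification is that finite index plus closed implies open in a quasitopological group, which follows just from continuity of translations rather than from continuity of multiplication.
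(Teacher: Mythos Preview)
Your proposal is correct and follows exactly the route the paper indicates: the text preceding Corollaries 2.7 and 2.8 states that both follow from the fact that a finite-index $\ov{\{1\}}$ in a quasitopological group is clopen, together with $\ov{\psg}=\ov{\{[e_x]\}}$ and (1.1). You have simply spelled out these steps, including the observation that connectedness forces the clopen subgroup to be the whole group, which is precisely the implicit argument the paper leaves to the reader.
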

Note that if $\psg$ is a finite index subgroup of $\pt$ or $\pt$ is finite, then the above corollaries hold.

In the following example, we show that there exists a metric space $X$ such that $\psg$ is not closed and hence $\psg\neq\ov{\{[e_x]\}}$. Note that here $X$ is not locally path connected.
\begin{example}
Let $Y_1=\{(x,y)\in\mathbb{R}^2\ |\ x^2+y^2=(1/2+1/n)^2,\ n\in\N\}$, $Y_2=\{(x,y)\in\mathbb{R}^2\ |\ x^2+y^2=1/4\}\cup\{(x,0)\in\mathbb{R}^2\ |\ 1/2\leq x\leq 3/2\}$ and $Y=Y_1\cup Y_2$ with $x=(3/2,0)=a$ as the base point (see Figure 1). Let $f_i:S^1\lo S_i$ be a homeomorphism from the 1-sphere into $Y$ such that $f_i((1,0))=(1/2+1/i,0)$, where $S_i=\{(x,y)\in\mathbb{R}^2\ |\ x^2+y^2=(1/2+1/i)^2\}$, for every $i\in\N$.
 Put $X_0=Y$ and let $X_i=X_{i-1}\cup_{f_i}C_i$, where $C_i$ is a cone over $S^1$ with height $1$, be the space obtained by attaching the cone $C_i$ to $X_{i-1}$ via $f_i$, for all $i\in\N$. Consider $X=\bigcup_{i\in \N} X_i$, then $X$ is a connected, locally simply connected metric space and hence $\psg$ is trivial. Let $\al_i:I\lo [1/2+1/i,3/2]\times \{0\}\times \{0\}$ be a linear homeomorphism such that $\al_i(0)=a$ for every $i\in\N$, $\al:I\lo [1/2,3/2]\times \{0\}\times \{0\}$ be a linear homeomorphism such that $\al(0)=a$ and $f:I\lo X$ defined by $f(t)=1/2(cost,sint,0)$.
 Therefore the sequence $\al_n*f_n*\al_n^{-1}$ uniformly converges to $\al*f*\al^{-1}$. For every $n\in\N$, $\al_n*f_n*\al_n^{-1}$ is a null homotopic loop which implies that $1\neq[\al*f*\al^{-1}]\in\ov{\{[e_a]\}}=\ov{\psg}$. Note that the uniform topology and compact open topology are equivalent in the loop space of a metric space.
\begin{figure}
\begin{center}
 \includegraphics[scale=1]{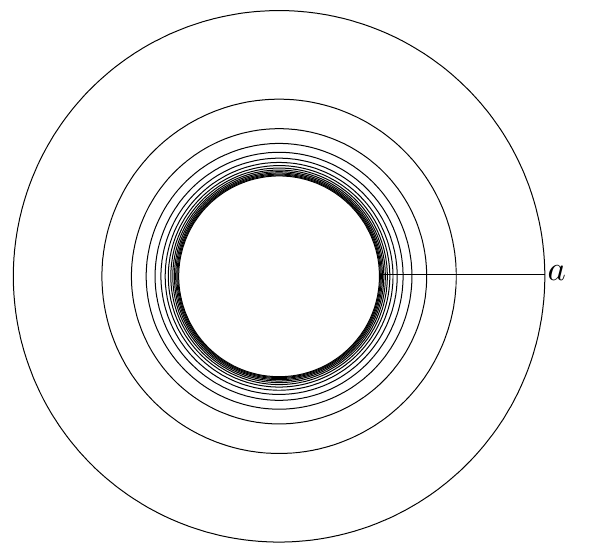}
  \caption{}\label{1}
 \end{center}
\end{figure}
\end{example}

\begin{definition}
We call a space $X$ small generated if $\pi_1(X,x)=\psg $, for every $x\in X$.
\end{definition}
\begin{remark}
By Theorem 2.2 every open neighborhood of trivial element of $\pt$ contains $\psg$. Hence if $X$ is a small generated space, then $\pt$ is an indiscrete topological group.
Note that using this fact, we have another proof for indiscreteness of $\pi_1^{qtop}(HA,0)$ since HA is a small generated space.
 It should be noted that the converse of the above fact is not necessarily true. For example, the space $X$ introduced in Example 2.9 is non-simply connected and homotopically Hausdorff space which implies that $\psg\neq\pi_1(X,x)$, but it has indiscrete topological fundamental group. Note that $[\al*f*\al^{-1}]$ generates $\pi_1(X,a)$ and by the argument at the end of Example 2.9 $[\al*f*\al^{-1}]\in\ov{\{[e_x]\}}$ which implies that $\pi_1(X,x)\sub \ov{\{[e_x]\}}$. Hence $\pt$ is an indiscrete topological group.
\end{remark}

A space $X$ is called a semi-locally small loop space if for each $x\in X$ there exists an open neighborhood $U$ of $x$ such that $i_*\pi_1(U,y)=\pi_1^s(X,y)$, for all $y\in U$, where $i:U\hookrightarrow X$ is the inclusion map (see \cite{P2}).
The authors \cite[Theorem 4.6]{P2} proved that for a connected, locally path connected and semi-locally small loop space $X$, $X$ is a small loop space if and only if $\pi_1^{qtop}(X,x)$ is an indiscrete topological group if and only if every covering $p:\wt{X}\lo X$ is trivial. Now, by the inclusion $\pi_1^{sg}(X,x)\sub \ov{\{[e_{x}]\}}$ it follows that density of $\pi_1^{sg}(X,x)$ in $\pi_1^{qtop}(X,x)$ is another equivalent condition to the above statements
\begin{remark}
Note that since the topology of $\pi_1^{\tau}(X,x)$ is coarser than the one of $\pt$, in fact $\pi_1^{\tau}(X,x)$ and $\pt$ have the same open subgroups \cite[Corollary 3.9]{Br2},
and $\pi_1^{\tau}(X,x)$ is always a topological group, it is routine to check that all the results of this section hold if we replace $\pt$ with $\pi_1^{\tau}(X,x)$. Also, note that the topological closure of $\psg$ in $\pt$ is a subset of the topological closure of $\psg$ in $\pi_1^{\tau}(X,x)$.
\end{remark}
\section{Small Generated Coverings}
By convention, the term \emph{universal covering} will always mean a categorical universal object, that is, a covering
$p:\wt{X}\lo X$, where $\wt{X}$ is path connected with the property that for every covering $q:\wt{Y}\lo X$ with a path connected space $\wt{Y}$ there exists a unique covering $r:\wt{X}\lo\wt{Y}$ such that $q\circ r= p$.
The following proposition was proved in \cite{P2}. It should be noted that the second inclusion also follows from \cite[\S2.5 Lemma 11]{S}.
\begin{proposition}
For every covering $p:\wt{X}\lo X$ and $x\in X$ the following relations hold: $$\pi_1^s(X,x)\leq\pi_1^{sg}(X,x)\leq p_*\pi_1(\wt{X},\tilde{x}).$$
\end{proposition}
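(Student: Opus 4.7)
The plan is to prove the two inclusions separately. The first inclusion $\pi_1^s(X,x)\leq\pi_1^{sg}(X,x)$ is essentially immediate from the definitions: for any $[\beta]\in\pi_1^s(X,x)$, take $\alpha=e_x$ to be the constant path at $x$; then $[\alpha*\beta*\alpha^{-1}]=[\beta]$ is by definition one of the generators of $\pi_1^{sg}(X,x)$. Hence every element of $\pi_1^s(X,x)$ lies in $\pi_1^{sg}(X,x)$.

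For the second inclusion, since $p_*\pi_1(\wt{X},\tilde{x})$ is a subgroup of $\pi_1(X,x)$, it suffices to show that every generator $[\alpha*\beta*\alpha^{-1}]$ of $\pi_1^{sg}(X,x)$ belongs to $p_*\pi_1(\wt{X},\tilde{x})$. Given such a generator, I would first use the path lifting property of the covering $p$ to lift $\alpha$ to a unique path $\tilde{\alpha}$ in $\wt{X}$ starting at $\tilde{x}$ and ending at some point $\tilde{y}\in p^{-1}(\alpha(1))$. The goal then reduces to showing that $\beta$ lifts to a loop at $\tilde{y}$, because then $\tilde{\alpha}*\tilde{\beta}*\tilde{\alpha}^{-1}$ is a loop at $\tilde{x}$ whose image under $p$ is (path homotopic to) $\alpha*\beta*\alpha^{-1}$.

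The main technical point, and what I would consider the main obstacle, is establishing that the lift of the small loop $\beta$ starting at $\tilde{y}$ is in fact a loop. Choose an open neighborhood $U$ of $\alpha(1)$ which is evenly covered by $p$, and let $\tilde{U}$ denote the sheet of $p^{-1}(U)$ containing $\tilde{y}$; so $p|_{\tilde{U}}\colon\tilde{U}\to U$ is a homeomorphism. Since $[\beta]\in\pi_1^s(X,\alpha(1))$, by the definition of a small loop there exists a loop $\beta'\colon I\to U$ based at $\alpha(1)$ with $[\beta]=[\beta']$ in $\pi_1(X,\alpha(1))$. Using $p|_{\tilde{U}}$ as a homeomorphism, $\beta'$ lifts to the loop $\tilde{\beta'}=(p|_{\tilde{U}})^{-1}\circ\beta'$ at $\tilde{y}$. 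By the homotopy lifting property, a path homotopy in $X$ between $\beta$ and $\beta'$ lifts to a path homotopy in $\wt{X}$ starting at $\tilde{y}$; since the lift of $\beta'$ ends at $\tilde{y}$, the lift $\tilde{\beta}$ of $\beta$ also ends at $\tilde{y}$, hence is a loop.

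Putting these together, $\tilde{\alpha}*\tilde{\beta}*\tilde{\alpha}^{-1}$ is a loop in $\wt{X}$ at $\tilde{x}$ that projects to $\alpha*\beta*\alpha^{-1}$, so $[\alpha*\beta*\alpha^{-1}]\in p_*\pi_1(\wt{X},\tilde{x})$. Since $p_*\pi_1(\wt{X},\tilde{x})$ is closed under the group operations, it contains the subgroup generated by all such elements, namely $\pi_1^{sg}(X,x)$, completing the proof. (Alternatively, as the authors remark, this second inclusion can be deduced from Spanier's lemma $(*)$ by taking $\mathcal{U}$ to be any open cover of $X$ by evenly covered open sets.)
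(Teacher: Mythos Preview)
Your proof is correct. The paper does not include an in-text proof of this proposition; it simply records that it was proved in \cite{P2} and remarks that the second inclusion also follows from Spanier's lemma \cite[\S2.5 Lemma 11]{S}. Your direct argument---lifting $\alpha$ via path lifting, then using an evenly covered neighborhood together with the small-loop property of $\beta$ to ensure the lift of $\beta$ is a loop---is exactly the natural proof one would expect in \cite{P2}, and you also note the Spanier alternative that the paper highlights.
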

Since the image subgroup $\pst$ in $\pi_1(X,x)$
consists of the homotopy classes of loops in $X$ based at $x$ whose lifts to $\wt{X}$ starting
at $\ti{x}$ are loops, for a covering $p:\wt{X}\lo X$ and $[\al]\in\psg$ every lift of $\al$ in $\wt{X}$ is a loop.
The following proposition comes from the local homeomorphism
property of $p$ and the injectivity of $p_*$. (cf. proof of \cite[Theorem 4.7]{P2}).
\begin{proposition}
If $\px$ is a covering, then $p_*\pi_1^{sg}(\wt{X},\ti{x})=\psg$.
\end{proposition}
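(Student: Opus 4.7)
The plan is to establish the two inclusions separately, with both reducing to a small-loop transfer between $\wt{X}$ and $X$ via evenly covered neighborhoods.

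For $p_*\pi_1^{sg}(\wt{X},\ti{x})\subseteq\psg$, I would take a generator $[\wt{\al}*\wt{\bt}*\wt{\al}^{-1}]$ with $[\wt{\bt}]\in\pi_1^s(\wt{X},\wt{\al}(1))$, set $\al=p\circ\wt{\al}$ and $\bt=p\circ\wt{\bt}$, and verify $[\bt]\in\pi_1^s(X,\al(1))$. Given any open neighborhood $V$ of $\al(1)$ in $X$, I would pick an evenly covered open $U\subseteq V$ with $\al(1)\in U$ and let $\wt{U}$ be the sheet over $U$ containing $\wt{\al}(1)$. Smallness of $\wt{\bt}$ applied to $\wt{U}$ supplies a loop in $\wt{U}$ homotopic rel endpoints to $\wt{\bt}$; projecting that loop and the homotopy through $p$ yields a loop in $U\subseteq V$ homotopic to $\bt$, as required.

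For the reverse inclusion $\psg\subseteq p_*\pi_1^{sg}(\wt{X},\ti{x})$, I would take a generator $[\al*\bt*\al^{-1}]$ with $[\bt]\in\pi_1^s(X,\al(1))$. By Proposition~3.1, this class lies in $p_*\pi_1(\wt{X},\ti{x})$, so the lift of $\al*\bt*\al^{-1}$ starting at $\ti{x}$ is a loop in $\wt{X}$; uniqueness of path lifting then decomposes this lift as $\wt{\al}*\wt{\bt}*\wt{\al}^{-1}$, forcing the lift $\wt{\bt}$ of $\bt$ starting at $\wt{\al}(1)$ to itself be a loop. To verify $[\wt{\bt}]\in\pi_1^s(\wt{X},\wt{\al}(1))$, I would fix an arbitrary open neighborhood $\wt{V}$ of $\wt{\al}(1)$ and, using the local homeomorphism property of $p$, choose an evenly covered open $U\ni\al(1)$ whose sheet $\wt{U}$ above $\wt{\al}(1)$ is contained in $\wt{V}$. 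Smallness of $\bt$ then produces a loop $\bt'$ in $U$ homotopic rel endpoints to $\bt$, and lifting $\bt'$ and the homotopy (via the homotopy lifting property of the covering, equivalent to injectivity of $p_*$) gives a loop $\wt{\bt'}$ in $\wt{U}\subseteq\wt{V}$ homotopic rel endpoints to $\wt{\bt}$.

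The main technical care, as I see it, goes into the reverse direction: checking that the lift of $\bt'$ actually lies in the prescribed sheet $\wt{U}$ and that the lifted homotopy remains rel endpoints. The former holds because $\bt'$ is a loop entirely in $U$ starting at $\al(1)=p(\wt{\al}(1))$, so the formula $\wt{\bt'}=(p|_{\wt{U}})^{-1}\circ\bt'$ defines a lift inside $\wt{U}$, which by uniqueness of path lifting coincides with the covering-theoretic lift; the latter follows because the lifted constant endpoints are uniquely determined by the constant paths at $\al(1)$. The downward direction is essentially immediate from continuity of $p$, while the upward direction requires this sheet containment together with homotopy lifting, but once both points are in place both inclusions follow by routine verification.
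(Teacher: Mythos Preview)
Your proof is correct and aligns with the paper's own approach: the paper does not spell out a proof but simply remarks that the proposition ``comes from the local homeomorphism property of $p$ and the injectivity of $p_*$'' (referring to \cite[Theorem~4.7]{P2}), and your argument is precisely a fleshing out of those two ingredients. The sheet-selection step for the reverse inclusion and the projection step for the forward inclusion are exactly the uses of local homeomorphism and injectivity of $p_*$ that the paper has in mind.
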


By the above result every covering space of a small generated space $X$ is homeomorphic to $X$.
\begin{definition}
By a $\tx{small generated covering}$ of a topological space $X$ we mean a covering $p:\wt{X}\lo X$ such that $\wt{X}$ is a small generated space.
\end{definition}

The following corollary is an immediate consequence of Proposition 3.2 and Definition 3.3.
\begin{corollary}
A covering $p:\wt{X}\lo X$ is a small generated covering if and only if $\pi_1^{sg}(X,x)=p_*\pi_1(\wt{X},\ti{x})$.
\end{corollary}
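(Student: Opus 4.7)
The plan is to obtain Corollary 3.4 as an immediate bookkeeping argument that combines Proposition 3.2 with the injectivity of the induced map on fundamental groups for a covering, plus Definition 3.3.

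For the forward implication I would assume that $\px$ is a small generated covering. By Definition 3.3, $\wt{X}$ is a small generated space, so $\pi_1(\wt{X},\ti{x})=\pi_1^{sg}(\wt{X},\ti{x})$. Applying $p_*$ to both sides and then invoking Proposition 3.2 gives
$$\pst=p_*\pi_1^{sg}(\wt{X},\ti{x})=\psg,$$
which is exactly the desired equality.

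For the reverse implication I would assume $\psg=\pst$. Again by Proposition 3.2, $\psg=p_*\pi_1^{sg}(\wt{X},\ti{x})$, so $p_*\pi_1^{sg}(\wt{X},\ti{x})=p_*\pi_1(\wt{X},\ti{x})$. Since $p_*:\pi_1(\wt{X},\ti{x})\lo\pi_1(X,x)$ is injective for any covering, cancelling $p_*$ yields $\pi_1^{sg}(\wt{X},\ti{x})=\pi_1(\wt{X},\ti{x})$. To upgrade this equality at the single basepoint $\ti{x}$ to the full statement that $\wt{X}$ is small generated, I would use the point-free property of the SG subgroup recalled at the start of Section 2: for any $y\in\wt{X}$, a change-of-basepoint isomorphism along a path in $\wt{X}$ carries $\pi_1^{sg}(\wt{X},\ti{x})$ onto $\pi_1^{sg}(\wt{X},y)$ and $\pi_1(\wt{X},\ti{x})$ onto $\pi_1(\wt{X},y)$, so the equality transports. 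Hence $\wt{X}$ is small generated, i.e.\ $p$ is a small generated covering.

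I do not expect a real obstacle here; the only mild point to be careful about is the basepoint issue in the reverse direction, which is cleanly settled by the basepoint independence of $\pi_1^{sg}$ together with path-connectedness of $\wt{X}$ (which is built into our convention for coverings).
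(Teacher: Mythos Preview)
Your argument is correct and matches the paper's approach: the paper simply declares the corollary an ``immediate consequence of Proposition 3.2 and Definition 3.3,'' and what you have written is precisely the unpacking of that claim, using injectivity of $p_*$ (which the paper also invokes just before Proposition 3.2) together with basepoint independence of $\pi_1^{sg}$ to pass from the equality at $\ti{x}$ to all points of $\wt{X}$.
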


In classical covering theory, for a connected and locally path connected space $X$, the existence of simply connected (universal) covering $\px$ is equivalent to the semi-locally simply connectedness of $X$ that means for every $y\in X$ there exists a neighborhood $U$ of $y$ such that $i_*\pi_1(U,y)\leq \pst=1$. By \cite[\S2.5 Theorem 13]{S} , for a connected and locally path connected $X$, there is a covering $\px$ with $\pst=\psg$
if there is an open cover $\cal U$ of $X$ such that $\pi({\cal U},x)\subseteq \psg$. Since $\pi_1^{sg}(X,\alpha(0))=[\alpha]\pi_1^{sg}(X,\alpha(1))[\alpha^{-1}]$ for all paths $\alpha$ in $X$, we may combine this fact with (*) to obtain the following criterion:\\
For a connected and locally path connected topological space $X$, there is a small generated covering $\px$ if and only if every $y\in X$ has an open neighborhood $U$ in $X$ such that $i_*\pi_1(U,y)\subseteq\pi_1^{sg}(X,y)$, where $i:U\hookrightarrow X$ denotes inclusion. (Note that for path connected $U$, $i_*\pi_1(U,u)\subseteq\pi_1^{sg}(X,u)$ holds for some $u\in U$ if and only if it holds for all $u\in U$.) Moreover, by (*), a small generated covering of $X$ satisfies the lifting criterion \cite[\S2.4 Theorem 5 and \S2.5 Lemma 1]{S} to all other coverings of $X$ and hence is a universal covering of $X$.
\begin{definition}
 We call a space $X$ semi-locally small generated if and only if for each $x\in X$ there exists an open neighborhood $U$ of $x$ such that $i_*\pi_1(U,x)\leq\pi_1^{sg}(X,x)$, where $i:U\hookrightarrow X$ is the inclusion map.
\end{definition}
\begin{theorem}
A connected, locally path connected space $X$ has a small generated covering if and only if $X$ is a semi-locally small generated space.
Also, a small generated covering of $X$ is a universal covering of $X$.
\end{theorem}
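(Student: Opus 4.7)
The plan is to prove each implication separately and then derive universality, mostly by assembling the tools the paper has already marshaled in the paragraph preceding Definition 3.5.

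For the forward direction, assume a small generated covering $p:\wt{X}\lo X$ exists. Given any $y\in X$, local path connectedness of $X$ furnishes an evenly covered, path connected open neighborhood $U$ of $y$. Any loop in $U$ based at $y$ lifts to a loop in the sheet of $p^{-1}(U)$ containing the chosen $\ti{y}\in p^{-1}(y)$, so $i_*\pi_1(U,y)\le p_*\pi_1(\wt{X},\ti{y})$. By Corollary~3.4, combined with the standard conjugacy relating image subgroups of $p$ at different lifts and the conjugation formula $\pi_1^{sg}(X,\al(0))=[\al]\pi_1^{sg}(X,\al(1))[\al^{-1}]$, the right-hand side equals $\pi_1^{sg}(X,y)$. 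This yields semi-local small generatedness at $y$.

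For the backward direction, assume $X$ is semi-locally small generated. Using local path connectedness, choose for each $y\in X$ a path connected open neighborhood $U_y$ with $i_*\pi_1(U_y,y)\le\pi_1^{sg}(X,y)$, and set $\mathcal{U}=\{U_y\}_{y\in X}$. The key inclusion to verify is $\pi(\mathcal{U},x)\le\pi_1^{sg}(X,x)$: a generator $[\al*\bt*\al^{-1}]$ of $\pi(\mathcal{U},x)$ has $[\bt]\in i_*\pi_1(U_{\al(1)},\al(1))\le\pi_1^{sg}(X,\al(1))$ by hypothesis, and the conjugation identity $\pi_1^{sg}(X,x)=[\al]\pi_1^{sg}(X,\al(1))[\al^{-1}]$ immediately places $[\al*\bt*\al^{-1}]$ in $\pi_1^{sg}(X,x)$. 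Spanier's existence theorem \cite[\S2.5 Theorem 13]{S} then produces a covering $p:(\wt{X},\ti{x})\lo(X,x)$ with $\pst=\pi_1^{sg}(X,x)$, which is small generated by Corollary~3.4.

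For universality, let $q:\wt{Y}\lo X$ be any covering with $\wt{Y}$ path connected. Proposition~3.1 applied to $q$ gives $\pi_1^{sg}(X,x)\le q_*\pi_1(\wt{Y},\ti{y})$, hence $\pst\le q_*\pi_1(\wt{Y},\ti{y})$. The lifting criterion \cite[\S2.4 Theorem 5, \S2.5 Lemma 1]{S}, applicable because $\wt{X}$ and $\wt{Y}$ inherit local path connectedness from $X$, produces a unique continuous $r:\wt{X}\lo\wt{Y}$ with $q\circ r=p$ and $r(\ti{x})=\ti{y}$; the standard fact that a lift of a covering through a covering between locally path connected spaces is again a covering then makes $r$ a covering.

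The main obstacle is the backward direction, and within it the verification that $\pi(\mathcal{U},x)\le\pi_1^{sg}(X,x)$. It requires combining the pointwise hypothesis $i_*\pi_1(U_y,y)\le\pi_1^{sg}(X,y)$ with the fact that $\pi_1^{sg}$ at different base points is related by path conjugation; this is precisely why semi-local small generatedness must be assumed at every point of $X$, not merely at the chosen base point.
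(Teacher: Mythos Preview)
Your argument is essentially the same as the paper's: both directions and the universality claim are handled exactly as in the discussion preceding Definition~3.5, via Spanier's existence theorem \cite[\S2.5 Theorem~13]{S}, the inclusion $(*)$/Proposition~3.1, the conjugation identity for $\pi_1^{sg}$, and the lifting criterion. One small imprecision: in your verification that $\pi(\mathcal{U},x)\le\pi_1^{sg}(X,x)$, a generator $[\al*\bt*\al^{-1}]$ has $\bt$ a loop in some $U_z\in\mathcal{U}$ containing $\al(1)$, not necessarily in $U_{\al(1)}$; you then need the observation (which the paper makes explicitly) that for path connected $U$, the inclusion $i_*\pi_1(U,u)\le\pi_1^{sg}(X,u)$ at one point $u\in U$ implies it at all points of $U$, again by the conjugation formula.
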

\begin{example}
Every small generated space is semi-locally small generated. Also, the product $X\times Y$ is semi-locally small generated if both $X$ and $Y$ are semi-locally small generated.
 If $(X,x)$ is a pointed small generated space and $(Y,y)$ is first countable and locally simply connected at $y$, then the one point union $X\vee Y=\frac{X\cup Y}{x\sim y}$ is semi-locally small generated.
\end{example}
Since every null homotopic loop is also a small loop, semi-locally simply connected spaces are also semi-locally small generated. Calcut and McCarthy \cite{Cal} proved that for connected and locally path connected spaces, semi-locally simply connectedness is equivalent to openness of $[e_x]$ that makes $\pt$ a discrete topological group. Here we extend this result in the following theorem using Corollary 2.4 and (1.1).
\begin{theorem}
Let $X$ be a connected and locally path connected space. Then the following statements are equivalent:\\
(i) $X$ is semi-locally small generated.\\
(ii) For each $x\in X$, every nonempty subset $U$ of $\pt$ is open if and only if $U$ is a union of some cosets of $\psg$.
\end{theorem}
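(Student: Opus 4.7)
The plan is to reduce condition (ii) to the statement that $\psg$ is an open subgroup of $\pt$, and then connect this openness to (i) via Theorem 3.6 and criterion (1.1).

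First, I would invoke Corollary 2.4 to note that the ``only if'' direction of the biconditional in (ii) is automatic: every nonempty open subset of $\pt$ is already a union of cosets of $\psg$, regardless of any hypothesis on $X$. So the actual content of (ii) lies in the converse direction, namely that every union of cosets of $\psg$ is open. Because $\pt$ is a quasitopological group, left translations are homeomorphisms; hence every coset $g\,\psg$ is open exactly when $\psg$ itself is open, and in that case an arbitrary union of cosets is open. Conversely, taking the union consisting of the single coset $\psg$ shows that (ii) forces $\psg$ to be open. Thus (ii) is equivalent to: $\psg$ is open in $\pt$.

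For (i) $\Rightarrow$ (ii), assume $X$ is semi-locally small generated. By Theorem 3.6 there exists a small generated covering $p:\wt{X}\lo X$, and Corollary 3.4 gives $p_*\pi_1(\wt{X},\ti{x})=\psg$. Applying (1.1), the subgroup $\psg$ is open in $\pt$, which by the reduction above is exactly (ii). For (ii) $\Rightarrow$ (i), apply the reduction in the other direction: (ii) makes $\psg$ open. Since $\psg$ is normal in $\pi_1(X,x)$, criterion (1.1) produces a covering $p:\wt{X}\lo X$ with $p_*\pi_1(\wt{X},\ti{x})=\psg$, which is a small generated covering by Corollary 3.4. Theorem 3.6 then concludes that $X$ is semi-locally small generated.

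I do not anticipate a serious obstacle; the entire argument is a packaging of existing results. The only conceptual point that needs emphasis is the translation between ``every union of cosets of $\psg$ is open'' and ``$\psg$ is open,'' which rests on the quasitopological group structure of $\pt$ and the one-directional Corollary 2.4. Care must be taken to note that the base-point choice is harmless, since the isomorphism class of $\psg$ is independent of $x$ and the covering criterion (1.1) applies at each chosen base point.
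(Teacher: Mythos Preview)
Your proposal is correct and matches the paper's intended argument: the paper does not spell out a proof for this theorem but indicates just before the statement that it follows ``using Corollary 2.4 and (1.1),'' which is precisely the route you take, with Theorem 3.6 and Corollary 3.4 serving as the natural bridge between openness of $\psg$ and semi-locally small generatedness.
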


In the following, we give an example of a small generated covering.
\begin{example}
Let $(S^{1},0)$ be the unique circle, $(HA,x)$ be the Harmonic Archipelago, where $x$ is the common point of boundary circles and $X=S^1\vee HA $ be their one point union. Consider the space $Y={\mathbb{R}\bigcup(\mathbb{Z}\times HA)}$ with the equivalence relation ${n\sim (n,x)}$, for every $n\in\mathbb{Z}$ and let $\wt{X}=Y/\sim$. Define $p:\wt{X}\lo X$ such that $p$ wraps $\mathbb{R}$ around $S^1$ like exponential map and sends identically Harmonic Archipelago to itself at every integer. Obviously $p:\wt{X}\lo X$ is a covering and since every loop in $\wt{X}$ is small generated, $\wt{X}$ is a small generated space and hence $p$ is a small generated covering.
\end{example}

The following corollary follows from the inclusion $\psg\sub\ov{\{[e_x]\}}$, Theorem 3.6 and (1.1).
\begin{corollary}
If $X$ is a connected, locally path connected and semi-locally small generated space, then the following statements are equivalent.\\
(i) $\psg$ is a dense subgroup of $\pt$.\\
(ii) $\pi_1^{top}(X,x)$ is an indiscrete topological group.\\
(iii) every covering $p:\wt{X}\lo X$ is trivial.\\
(iv) $X$ is a small generated space.
\end{corollary}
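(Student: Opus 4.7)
The plan is to prove the cyclic chain $(i) \Rightarrow (ii) \Rightarrow (iii) \Rightarrow (iv) \Rightarrow (i)$, which is a matter of dovetailing the facts already assembled in Sections~2 and~3; no fresh geometric construction is required.

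For $(i) \Rightarrow (ii)$, I would appeal to Remark~2.6, which identifies $\ov{\psg}$ with $\ov{\{[e_x]\}}$. Density of $\psg$ therefore means $\ov{\{[e_x]\}} = \pt$, which by Remark~2.3 is precisely the condition that $\pt$ carry the indiscrete topology; any indiscrete quasitopological group is automatically a topological group, and this transfers to $\pi_1^\tau$ via Remark~2.11. For $(ii) \Rightarrow (iii)$, the key is the covering--subgroup dictionary~(1.1): for any covering $\px$ with $\wt{X}$ path connected, $\pst$ must be open in $\pt$; under $(ii)$ the only open subgroup is the whole group, so $\pst = \pi_1(X,x)$, and hence $p$ is equivalent to the identity map, i.e.\ trivial.

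The implication $(iii) \Rightarrow (iv)$ is the only one that actually uses the semi-locally small generated hypothesis. By Theorem~3.6 the space $X$ admits a small generated covering $p:\wt{X} \lo X$, and by Corollary~3.4 this covering satisfies $p_*\pi_1(\wt{X},\ti{x}) = \psg$. Since $(iii)$ forces $p$ to be trivial, we must have $p_*\pi_1(\wt{X},\ti{x}) = \pi_1(X,x)$, and combining these two equalities yields $\psg = \pi_1(X,x)$; because $\psg$ is point-free, this persists at every base point, so $X$ is small generated. Finally $(iv) \Rightarrow (i)$ is immediate, since in that case $\psg = \pi_1(X,x)$ is trivially dense in $\pt$. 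No step presents a genuine obstacle; the only subtlety is reading~(1.1) in its full generality, so that the openness of $\pst$ in $\pt$ is automatic for any covering and available to drive $(ii) \Rightarrow (iii)$.
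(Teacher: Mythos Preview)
Your cyclic proof is correct and follows essentially the same route the paper indicates: the paper gives no detailed argument for this corollary, merely noting that it ``follows from the inclusion $\psg\sub\ov{\{[e_x]\}}$, Theorem~3.6 and (1.1)'', and your four implications unpack precisely those three ingredients (the first for $(i)\Leftrightarrow(ii)$, the third for $(ii)\Rightarrow(iii)$, the second together with Corollary~3.4 for $(iii)\Rightarrow(iv)$). One trivial remark: your reference to ``Remark~2.11'' for passing to $\pi_1^{\tau}$ should be Remark~2.12 in the paper's numbering, but since (ii) is really about $\pt$ this aside is unnecessary anyway.
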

%-------------------------------------------------------------------------------------------------------------------------------------------
\section{An Application to Quasitopological Fundamental Groups}
After that Fabel \cite{F} and Brazas \cite{Br} showed that the quasitopological fundamental group introduced by Biss \cite{B} fails to be a topological group, in general,
there is an open question that when quasitopological fundamental groups are topological groups. Calcut and McCarthy \cite{Cal} proved that the topology of fundamental group of a locally path connected and semi-locally simply connected space is discrete and so this space has the quasitopological fundamental group as topological group.
The counterexamples of Fabel \cite{F} and Brazas \cite{Br} show that $\pi_1^{qtop}$ is not a functor into the category of topological groups. Brazas \cite{Br2} introduced a new topology on fundamental groups made them topological groups and denoted this new functor by $\pi_1^{\tau}$. For a topological space $X$, $\pt$ and $\pi_1^{\tau}(X,x)$ has the same underlying set and algebraic structure but different topologies. In fact, the topology of $\pi_1^{\tau}(X,x)$ is obtained by removing some open subsets of $\pt$ and hence the topology of $\pi_1^{\tau}(X,x)$ is coarser than the topology of $\pt$. Since it is not known which open subsets of $\pt$ are removed, working with $\pt$ seems easier. Also, if $\pt$ is a topological group, then $\pt\cong\pi_1^{\tau}(X,x)$ as topological groups \cite{Br}. Therefore, the question\emph{``What kind of topological structure is $\pt$?''} is still interesting.
In the following theorem we show that quasitopological fundamental groups of semi-locally small generated spaces are topological groups.
\begin{theorem}
The quasitopological fundamental group of a semi-locally small generated space is a topological group.
\end{theorem}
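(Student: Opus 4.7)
The plan is to reduce the problem to the observation that $\psg$ is an open subgroup of $\pt$, and then to show that this openness forces the ambient topology on $\pt$ to coincide with the pullback of the discrete topology on $\pt/\psg$, from which continuity of multiplication follows by a short diagram chase.

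First, I would adopt the standing assumption (consistent with Theorem 3.6 and Theorem 3.7) that $X$ is connected and locally path connected. Under this hypothesis, since $X$ is semi-locally small generated, Theorem 3.6 produces a small generated covering $p:(\wt{X}, \ti{x})\lo (X,x)$ with $p_*\pi_1(\wt{X},\ti{x})=\psg$. Applying the criterion (1.1), the existence of such a covering forces $\psg$ to be an open subgroup of $\pt$; and of course $\psg$ is normal in $\pi_1(X,x)$ by construction.

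Next, I would combine this with two observations from Section 2. Since $\pt$ is a quasitopological group, left translation is a homeomorphism, so every coset $[f]\psg$ is open in $\pt$. By Corollary 2.4, every nonempty open subset of $\pt$ is a disjoint union of cosets of $\psg$. Consequently, the cosets $\{[f]\psg : [f]\in\pt\}$ form a basis for the topology of $\pt$, and this topology is precisely the coarsest one making the quotient homomorphism $q:\pt\lo\pt/\psg$ continuous when $\pt/\psg$ carries the discrete topology.

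Finally, to verify continuity of multiplication $m:\pt\times\pt\lo\pt$, I would use the normality of $\psg$ to obtain a well-defined induced multiplication $\bar m:\pt/\psg\times\pt/\psg\lo\pt/\psg$, yielding the commutative relation $q\circ m=\bar m\circ (q\times q)$. Since $\pt/\psg$ is discrete, $\bar m$ is automatically continuous; hence for any open $U\sub\pt$ we may write $U=q^{-1}(V)$ and compute $m^{-1}(U)=(q\times q)^{-1}(\bar m^{-1}(V))$, which is open in $\pt\times\pt$ by continuity of $q$. Inversion in $\pt$ is continuous by the quasitopological group hypothesis, so $\pt$ is a topological group.

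The main obstacle is really concentrated in the first step, where the openness of $\psg$ depends on Theorem 3.6 and (1.1), both of which presuppose the connected-and-locally-path-connected setting; if the theorem is to be read without these hypotheses, one would instead need a direct argument that any loop sufficiently close (in the compact-open topology) to a small generated loop is itself small generated, by cutting a representative into pieces lying in semi-locally small generated neighborhoods of each point of its image and reassembling via conjugation. Once $\psg$ is known to be open, the remaining diagram chase is routine, so the conceptual content of Theorem 4.1 lies entirely in that openness.
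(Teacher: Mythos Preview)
Your proposal is correct and follows essentially the same route as the paper: both arguments hinge on the openness of the normal subgroup $\psg$ (you via Theorem~3.6 and (1.1), the paper via Theorem~3.8, which packages the same ingredients), the fact that every open set is a union of its cosets, and then normality to push multiplication through. Your quotient-map diagram chase is just a repackaging of the paper's direct choice $V=[\alpha]\psg$, $W=[\beta]\psg$ with $\mu(V,W)=([\alpha][\beta])\psg\subseteq U$; and your caveat about the tacit connected, locally path connected hypothesis matches the paper's own implicit reliance on Theorem~3.8.
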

\begin{proof}
It is sufficient to show that the group multiplication is continuous. Assume that $X$ is a semi-locally small generated space and $\mu:\pt\times\pt\lo\pt$ is the group multiplication. Let $U$ be an open neighborhood of $[\al][\bt]=[\alpha *\beta]$ for $[\al], [\bt]\in\pt$, where $*$ is the operation of concatenation of two paths. We show that there are open neighborhoods $V$ and $W$ of $\al$ and $\bt$, respectively, such that $\mu(V,W)\sub U$. Since every open subset of $\pt$ is a union of some cosets of $\psg$, $([\al][\bt])\psg\sub U$. By Theorem 3.8, $\psg$ is open and hence $V=[\al]\psg$ and $W=[\bt]\psg$ are open neighborhoods of $[\al]$ and $[\bt]$, respectively. Also, $\mu([\al]\psg,[\bt]\psg)=([\al][\bt])\psg$ by normality of $\psg$, as desired.
\end{proof}
\begin{remark}
Note that if $X$ is semi-locally small generated, then by the above theorem and the definition of $\pi_1^{\tau}(X,x)$ we have $\pt=\pi_1^{\tau}(X,x)$.
Hence Theorem 3.8 and Corollary 3.10 hold if we replace $\pt$ with $\pi_1^{\tau}(X,x)$.
\end{remark}

Since every semi-locally small loop space is also semi-locally small generated, then we have the following result.
\begin{corollary}
The quasitopological fundamental group of a semi-locally small loop space is a topological group.
\end{corollary}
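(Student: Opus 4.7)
The plan is to deduce the corollary by reducing the hypothesis to the one already covered by Theorem 4.1. Concretely, I will show that every semi-locally small loop space is automatically a semi-locally small generated space, and then invoke Theorem 4.1 directly.

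First I would unpack the two definitions side by side. A space $X$ is semi-locally small loop if each $x\in X$ has an open neighborhood $U$ with $i_*\pi_1(U,y)=\pi_1^s(X,y)$ for \emph{every} $y\in U$, whereas it is semi-locally small generated if each $x\in X$ has an open neighborhood $U$ with $i_*\pi_1(U,x)\leq \pi_1^{sg}(X,x)$ (the condition being only at the distinguished point $x$, for the inclusion $i:U\hookrightarrow X$). Given a semi-locally small loop space $X$ and a point $x\in X$, pick the open neighborhood $U$ of $x$ guaranteed by the semi-locally small loop condition. Taking $y=x$ gives $i_*\pi_1(U,x)=\pi_1^s(X,x)$, and by Proposition 3.1 we have $\pi_1^s(X,x)\leq \pi_1^{sg}(X,x)$. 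Hence $i_*\pi_1(U,x)\leq \pi_1^{sg}(X,x)$, which is exactly the semi-locally small generated condition at $x$. Since $x$ was arbitrary, $X$ is semi-locally small generated.

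Now I would close the argument by applying Theorem 4.1 to this semi-locally small generated space: its quasitopological fundamental group $\pi_1^{qtop}(X,x)$ is a topological group, which is the desired conclusion.

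The only real point requiring care is the change from an ``equality for all $y\in U$'' condition to an ``inequality at the base point $x$'' condition, but this is immediate once one observes that the semi-locally small generated condition is the weaker of the two and that Proposition 3.1 provides the containment $\pi_1^s\leq\pi_1^{sg}$. No obstacle of substance is expected; essentially the corollary is a packaging statement asserting that Theorem 4.1 subsumes the semi-locally small loop setting.
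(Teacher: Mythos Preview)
Your proposal is correct and follows exactly the paper's approach: the paper simply asserts, in the sentence preceding the corollary, that ``every semi-locally small loop space is also semi-locally small generated'' and then records the corollary as an immediate consequence of Theorem~4.1. You have merely filled in the (straightforward) details of that implication using Proposition~3.1, which is entirely appropriate.
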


The authors \cite{P1} showed that the quasitopological fundamental groups of small loop spaces are indiscrete topological groups (note that this fact also follows from the inclusion $\psg\sub\ov{\{[e_x]\}}$). Also, therein, using the results of \cite{G}, the authors introduced a class of spaces with quasitopological fundamental groups as prodiscrete topological groups. Note that all known quasitopological fundamental groups which are topological group have discrete, indiscrete or prodiscrete topology. In the next example, we show that the quasitopological fundamental groups of semi-locally small generated spaces do not have necessarily these topologies.
\begin{example}
By Example 3.9, the space $X=S^1\vee HA$ has small generated covering and hence $\pt$ is a topological group. Since $X$ is a metric space and the uniform topology and the compact open topology are equivalent in metric spaces, $\ov{\psg}\neq \pt$ which implies that the topology of $\pt$ is not indiscrete. Also, by a corollary of \cite[III.7.3, Proposition 2]{Bu} prodiscrete topological groups are totally disconnected and hence $\pt$ is not a prodiscrete topological group since it is not totally disconnected by Theorem 2.2.
\end{example}
\subsection*{Acknowledgements}
The authors would like to thank the referee for the valuable comments and useful suggestions to improve the present paper.
%=========================================================================================================================================
%=========================================================================================================================================

%% The Appendices part is started with the command \appendix;
%% appendix sections are then done as normal sections
%% \appendix

%% \section{}
%% \label{}

%% References
%%
%% Following citation commands can be used in the body text:
%% Usage of \cite is as follows:
%%   \cite{key}         ==>>  [#]
%%   \cite[chap. 2]{key} ==>> [#, chap. 2]
%%

%% References with bibTeX database:

%\bibliographystyle{elsarticle-num}
%\bibliography{<your-bib-database>}

%% Authors are advised to submit their bibtex database files. They are
%% requested to list a bibtex style file in the manuscript if they do
%% not want to use elsarticle-num.bst.

%% References without bibTeX database:

% \begin{thebibliography}{00}
\section*{References}

%% \bibitem must have the following form:
%%   \bibitem{key}...
%%

% \bibitem{}

 %\end{thebibliography}

\end{document}